\newtheorem{thm}{Theorem}
\newtheorem{cor}[thm]{Corollary}
\newtheorem{prop}[thm]{Proposition}
\begin{document}

\title{Notes on Loewy series of centers of 
$p$-blocks}
\author{Yoshihiro Otokita\footnote{Department of Mathematics and Informatics, Chiba University, Chiba--shi 263--8522, Japan. E--mail: otokita@chiba-u.jp}}
\date{}

\maketitle

\begin{abstract}
 The present paper describes some results on the Loewy series of the center of a modular group algebra in order to solve a problem on the number of irreducible ordinary characters. 
 For instance, we prove that a $p$-block of a finite group has at least $p+2$ characters if its defect group contains non-elementary center. 
\end{abstract}


\section{Introduction}
 In this paper we consider a problem on the number of irreducible ordinary characters of a finite group through some studies of the center of a modular group algebra.

Let $G$ be a finite group and $F$ an algebraically closed field of characteristic $p>0$. For a $p$-block $B$ of the group algebra $FG$ with defect $d$, we denote by $k(B)$ the number of associated irreducible ordinary characters. In \cite{HK} H\'{e}thelyi and K\"{u}lshammer conjectured that:
\[\textit{$2\sqrt{p-1} \le k(B)$ unless $d=0$?} \]
This is a refinement of a result by Mar\'{o}ti \cite{M} (see also \cite{MM}) and is still unsolved in general. In modular representation theory it is known that $k(B)$ is equal to the dimension of the center $ZB$ of $B$. We accordingly examine the algebraic structure of $ZB$ in order to solve the problem above. 

In the next section we prove the following results.

We first determine the Loewy structure of $ZB$ for cyclic defect groups. More precisely, it is shown that each codimension of the Loewy series of $ZB$ is equal to $1$ or the inertial index of $B$. Moreover we refer to a remark on 
$ZB$ related to the problem above. 

The second subsection deals with bounds for the Loewy length $LL(ZB)$, that is, the nilpotency index of the Jacobson radical of $ZB$. Some papers \cite{KOS, KS, Ok, Ot} have discussed its upper bounds: for example, Okuyama has stated $LL(ZB) \le p^{d}$. The main purpose of this section is to give a lower bound for $LL(ZB)$. As an immediate corollary to this bound, we conclude that $B$ has at least $p+2$ characters provided its defect group contains non-elementary center.

Throughout this paper we use the following notations.

We fix a defect group $D$ of order $p^{d}$. Let $N_{G}(D, b)$ be the inertial subgroup of a root $b$ of $B$ in $N_{G}(D)$. Then the inertial index of $B$ is $e(B) = |N_{G}(D, b) : DC_{G}(D) |$. As usual $l(B)$ denotes the number of irreducible Brauer characters associated to $B$. For a finite-dimensional algebra $A$ over $F$ with Jacobson radical $J(A)$, its Loewy length $LL(A)$ is defined to be the smallest positive integer $t$ such that $J(A)^{t} = 0$. For an integer $n \ge 1$ we define $c(n)$ as the codimension of $J(A)^{n}$ in $J(A)^{n-1}$. Then $c(n) \neq 0$ for any $1 \le n \le LL(A)$.

\section{Results}
\subsection{Cyclic defect groups}

 In this section we study $c(n)$ for $ZB$. It always holds $c(1)=1$ as $ZB$ is local. Moreover it is clear that $c(2)=0$ if and only if $d=0$. We next see the case of $c(2)=1$ in the proposition below. This is a corollary to {\cite[Proposition 8]{KOS}} since it is easy to check $c(n) \le 1$ for all $3 \le n$. 

\begin{prop} \label{Prop1} If $d > 0$, then the following are equivalent:
\begin{enumerate}
\item $c(2)=1$;
\item $ZB$ is uniserial;
\item $B$ is nilpotent and $D$ is cyclic.
\end{enumerate}
\end{prop}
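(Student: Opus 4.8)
The plan is to prove the equivalences in a cycle: $(1) \Rightarrow (3) \Rightarrow (2) \Rightarrow (1)$, exploiting the hint in the text that the author has already recorded two facts—that $ZB$ is local (so $c(1)=1$) and that $c(n) \le 1$ for all $n \ge 3$. These two facts together say that the Loewy-layer dimensions of $ZB$ are $1, c(2), 1, 1, \dots, 1, 0, 0, \dots$, so the whole question of uniseriality hinges precisely on the value of $c(2)$.

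Let me think about the structure. Since $ZB$ is a commutative local $F$-algebra, its radical layers $J^{n-1}/J^{n}$ have dimensions $c(n)$. We already know $c(1) = 1$ and $c(n) \le 1$ for $n \ge 3$. So the algebra is uniserial precisely when $c(2) \le 1$, i.e. when $c(2) = 1$ (since $d > 0$ forces $c(2) \ne 0$). This immediately gives $(1) \Leftrightarrow (2)$: uniseriality of a local commutative algebra means every radical layer has dimension $\le 1$, and given the known bounds on the other layers, the only possible obstruction is the second layer. So I would dispatch $(1) \Leftrightarrow (2)$ first and somewhat cheaply, essentially by unwinding the definition of uniserial together with the stated bound $c(n) \le 1$ for $n \ge 3$.

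The substantive content is the link to $(3)$, and this is where I would lean on the cited {\cite[Proposition 8]{KOS}}, as the text explicitly flags that the whole proposition is ``a corollary'' to it. The plan is to invoke that result to translate the dimension condition ``$c(2) = 1$'' (equivalently ``$\dim ZB = LL(ZB)$'', i.e. $ZB$ is uniserial) into the structural statement that $B$ is nilpotent with cyclic defect group. For the forward direction $(2) \Rightarrow (3)$ I expect {\cite[Proposition 8]{KOS}} to characterize exactly when $ZB$ is uniserial in terms of block invariants, yielding nilpotency and cyclicity of $D$. For the converse $(3) \Rightarrow (1)$, I would use the well-understood theory of nilpotent blocks: by Broué--Puig a nilpotent block $B$ is Morita equivalent (indeed has center isomorphic) to the group algebra of its defect group, so $ZB \cong Z(FD) = FD$ when $D$ is cyclic and abelian. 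For a cyclic $p$-group $D$ of order $p^d$, the group algebra $FD \cong F[x]/(x^{p^d})$ is uniserial, so $c(2) = 1$ follows by direct inspection.

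The main obstacle, and the step I would scrutinize most carefully, is making the reduction to {\cite[Proposition 8]{KOS}} precise—specifically verifying that the hypothesis ``$c(n) \le 1$ for $n \ge 3$'' is genuinely available and correctly applied, and confirming that the cited proposition characterizes uniseriality of $ZB$ by exactly nilpotency-plus-cyclic-defect rather than some superficially different condition. A secondary subtlety is the converse direction: I must ensure that $ZB \cong FD$ for nilpotent blocks is being used correctly (the Broué--Puig structure theorem gives a source algebra statement; extracting the center requires that the center of the block is isomorphic to the center of the group algebra of $D$, which for abelian $D$ is all of $FD$). Provided these two hooks hold, the cycle closes cleanly and the proposition follows with very little additional computation.
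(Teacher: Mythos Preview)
Your proposal is correct and matches the paper's approach: the paper gives no separate proof but simply declares the proposition ``a corollary to {\cite[Proposition 8]{KOS}} since it is easy to check $c(n)\le 1$ for all $3\le n$,'' which is exactly your reduction of $(1)\Leftrightarrow(2)$ to the known layer bounds followed by an appeal to the cited result for the link with $(3)$. Your separate Brou\'e--Puig argument for $(3)\Rightarrow(1)$ is a harmless redundancy---the paper evidently subsumes both directions of $(2)\Leftrightarrow(3)$ into the single citation.
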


In the next theorem we determine all the codimensions $c(n)$ for cyclic defect groups. In the proof below we use the algebra of fixed points associated to $B$. As $N_{G}(D, b)$ acts on $Z(D)$ we  define
\[ F[Z(D)]^{N_{G}(D, b)} = \{ a \in F[Z(D)] \ ; \ \text{$t^{-1}at = a$ for $t \in N_{G}(D, b)$} \}. \]
The last part of this theorem is due to {\cite[Corollary 2.8]{KKS}}.

\begin{thm} \label{Thm2} If $D$ is cyclic, then 
\begin{equation*}
c(n) = \begin{cases}
               1  & (n=1) \\
               e(B) & (n=2) \\
               1 &  (3 \le n \le LL(ZB)) 
             \end{cases}
 \end{equation*}
and
\[ LL(ZB) = \frac{p^{d} - 1}{e(B)} + 1. \]
\end{thm}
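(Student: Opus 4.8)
The plan is to prove Theorem \ref{Thm2} by a dimension count, feeding in three facts: the identity $\dim_{F} ZB = k(B)$ recalled in the introduction, the classical description (Brauer--Dade) of blocks with cyclic defect groups, and the Loewy length formula of \cite[Corollary 2.8]{KKS}. By the theory of cyclic blocks the Brauer tree of $B$ has $e(B)$ edges and exceptional multiplicity $m = (p^{d}-1)/e(B)$, which is an integer because the inertial quotient embeds into the $p'$-part of $\mathrm{Aut}(D)$, forcing $e(B) \mid p-1 \mid p^{d}-1$; hence $k(B) = e(B) + m$ and therefore $\dim_{F} ZB = e(B) + (p^{d}-1)/e(B)$. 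Since $ZB$ is local we have $c(1)=1$, and since it was already noted that $c(n) \le 1$ for $n \ge 3$ while $c(n) \neq 0$ for every $1 \le n \le LL(ZB)$, we obtain $c(n)=1$ for all $3 \le n \le LL(ZB)$ once the Loewy length is known.

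First I would compute the fixed-point algebra. As $D$ is cyclic and hence abelian, $Z(D)=D$ and $F[Z(D)] = FD \cong F[x]/(x^{p^{d}})$, a uniserial algebra whose radical $J=(x)$ has one-dimensional layers $J^{n}/J^{n+1}$. Writing $E = N_{G}(D,b)/DC_{G}(D)$, a cyclic group of order $e(B)$ acting faithfully on $D$ through the $p'$-part of $\mathrm{Aut}(D)$, a generator sends $g \mapsto g^{a}$ with $\bar{a} \in \mathbb{F}_{p}^{\times}$ of multiplicative order $e(B)$; expanding $g^{a}-1 = (1+s)^{a}-1$ with $s=g-1$ shows that $E$ acts on $J^{n}/J^{n+1}$ by the scalar $\bar{a}^{\,n}$. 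This layer has nonzero $E$-fixed points exactly when $e(B) \mid n$, and since $|E|$ is invertible in $F$ the functor $(-)^{E}$ is exact, so $F[Z(D)]^{N_{G}(D,b)} = (FD)^{E}$ is uniserial with one-dimensional layers at levels $0, e(B), 2e(B), \dots, p^{d}-1$ (a fixed lift $u$ of $\overline{x^{e(B)}}$ generates it, with $u^{k}$ landing at level $k\,e(B)$). Its dimension and its Loewy length are thus both $(p^{d}-1)/e(B)+1$, and by \cite[Corollary 2.8]{KKS} this value is precisely $LL(ZB)$, which settles the last displayed assertion.

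It remains to assemble the codimensions. With $LL(ZB) = (p^{d}-1)/e(B)+1$ established, the opening remarks give $c(1)=1$ and $c(n)=1$ for $3 \le n \le LL(ZB)$, so summing over the Loewy series yields $\dim_{F} ZB = 1 + c(2) + \bigl(LL(ZB)-2\bigr)$. Substituting $\dim_{F} ZB = e(B) + (p^{d}-1)/e(B)$ and the Loewy length forces $c(2) = e(B)$, completing the proof. The one genuinely nontrivial ingredient is the identification $LL(ZB) = LL\bigl(F[Z(D)]^{N_{G}(D,b)}\bigr)$ coming from \cite{KKS}: it ties the global invariant $LL(ZB)$ to the purely local fixed-point algebra, and the supporting lemma that $E$ acts on the $n$-th radical layer of $FD$ by $\bar{a}^{\,n}$ is the technical heart of the local computation. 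Everything beyond this is the arithmetic of the Brauer tree.
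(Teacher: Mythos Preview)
Your argument has a gap at the step where you assert $c(n)\le 1$ for all $n\ge 3$. You treat this as something ``already noted'' before Proposition~\ref{Prop1}, but that remark lives inside the discussion of Proposition~\ref{Prop1}, i.e.\ under the hypothesis $c(2)=1$: once $\dim J(ZB)/J(ZB)^{2}=1$, Nakayama's lemma makes $J(ZB)$ principal over the local ring $ZB$, so every layer $J(ZB)^{n-1}/J(ZB)^{n}$ is cyclic and hence at most one-dimensional. It is \emph{not} a general fact about blocks---for $p$ odd and $B=F[C_{p}\times C_{p}]$ one computes $c(3)=3$. In the situation of Theorem~\ref{Thm2} with $e(B)>1$ you are aiming for $c(2)=e(B)>1$, so the hypothesis of that remark is violated and the inequality $c(n)\le 1$ is simply not available to you. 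Without it your dimension count cannot separate $c(2)$ from the higher $c(n)$.

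The paper closes exactly this gap by reducing, via perfect isometry \cite{K}, to $B=F[D\rtimes I]$ and then using the decomposition $ZB=(FD)^{I}\oplus\Gamma$ with $\Gamma$ the span of defect-zero class sums. Since $\Gamma$ lies in the socle of $B$ one has $\Gamma^{2}=0$, whence $J(ZB)^{2}=J\bigl((FD)^{I}\bigr)^{2}$; the uniseriality of $(FD)^{I}$ (which you correctly verify, and which the paper cites from the proof of \cite[Corollary~2.8]{KKS}) then forces $c(n)=1$ for $3\le n\le LL(ZB)$ directly. Your remaining ingredients---the explicit fixed-point computation, the appeal to \cite[Corollary~2.8]{KKS} for $LL(ZB)$, and the final dimension count using Dade's formula $k(B)=e(B)+(p^{d}-1)/e(B)$---match the paper's, so once you supply this structural step the two proofs coincide.
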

                
\begin{proof}
Put $I = N_{G}(D, b) / C_{G}(D)$. Since $B$ is perfectly isometric to its Brauer correspondent in $N_{G}(D)$ we may assume $B = FH$ where $H = D \rtimes I$ by \cite{K}. Let $\Gamma$ be the ideal of $ZFH$ spanned by all class sums of defect $0$. Then $ZFH = FD^{N_{G}(D, b)} \oplus \Gamma$ (cf. {\cite[Theorem 1.1]{BS}}). Since $\Gamma$ is contained in the socle of $FH$ we have $\Gamma^{2}=0$. Thus $J(ZFH) = J(FD^{N_{G}(D, b)}) \oplus \Gamma$ and $J(ZFH)^{2} = J(FD^{N_{G}(D, b)})^{2}$.  As $FD^{N_{G}(D, b)}$ is uniserial (see the proof of {\cite[Corollary 2.8]{KKS}}), it follows that $c(n)=1$ for $3 \le n \le LL(ZB)$. Therefore $c(2)=k(B) - 1 - \{ LL(ZB) - 2\} = e(B)$ by Dade's theory.
\end{proof}

For cyclic defect groups $l(B)=e(B)$ and
\[ k(B) = \frac{p^{d}-1}{e(B)} + e(B) \ge 2 \sqrt{p^{d}-1}. \]
In general the relations between $c(n), l(B)$ and $e(B)$ are as yet unknown. However it is possible to answer the question in Introduction under the conditions below inspired by Theorem \ref{Thm2}.

We now assume $d > 0$ and one of the following holds:

\begin{itemize}
\item[(i)] $e(B) \le c(n)$ for some $2 \le n \le LL(ZB)$;
\item[(ii)] $e(B) \le l(B)$.
\end{itemize}

Then we obtain $2 \sqrt{\lambda - 1} \le k(B)$ where $\lambda = LL(F[Z(D)])$ as follows:

By {\cite[Corollary 2.7]{KKS}},
\[ \frac{\lambda - 1}{e(B)} + 1 \le LL(F[Z(D)]^{N_{G}(D, b)}) \le LL(ZB). \]
It is known that $l(B)$ is equal to the dimension of the Reynolds ideal of $ZB$. Since it is contained in the socle of $ZB$, 
\[ k(B) \ge 1 + e(B) + \{(\lambda - 1)/e(B) - 1\} \ge 2 \sqrt{\lambda - 1} \]
in either case.


If $Z(D)$ has type $(p^{a_{1}}, \dots, p^{a_{r}})$, then $\lambda = p^{a_{1}} + \dots + p^{a_{r}} - r + 1$. Therefore we deduce $2 \sqrt{p-1} \le k(B)$ in the assumptions.


\subsection{Lower bounds}

In this subsection we consider a lower bound for $LL(ZB)$. 
By a result of Brou\'{e} {\cite[Proposition (III) 1.1]{B}} there exists an ideal $K$ of $ZB$ such that $ZB/K$ is isomorphic to $F[Z(D)]^{N_{G}(D, b)}$. Thereby $LL(F[Z(D)]^{N_{G}(D, b)}) \le LL(ZB)$. In the next theorem we give a different lower bound for the left side from {\cite[Corollary 2.7]{KKS}}. 

 \begin{thm} \label{Thm4} Let $p^{m}$ be the exponent of $Z(D)$. Then
 \[ \frac{p^{m}+p-2}{p-1} \le LL(F[Z(D)]^{N_{G}(D, b)}) \le LL(ZB). \]
 \end{thm}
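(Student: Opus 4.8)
The right-hand inequality $LL(F[Z(D)]^{N_{G}(D,b)}) \le LL(ZB)$ is exactly Brou\'{e}'s result quoted just above, so the whole content lies in the left-hand bound. Write $V = Z(D)$ and let $Q$ be the image of $N_{G}(D,b)$ in $\mathrm{Aut}(V)$. Since $D$ centralizes $V$, the group $Q$ is a quotient of the inertial quotient $N_{G}(D,b)/DC_{G}(D)$, hence a $p'$-group, and $F[Z(D)]^{N_{G}(D,b)} = (FV)^{Q} =: R$. The plan is to exhibit one element $g \in J(R)$ and a large integer $t$ with $g^{t} \neq 0$; then $J(R)^{t} \neq 0$, so $LL(R) \ge t+1$, and I will arrange $t \ge (p^{m}-1)/(p-1)$, which gives the claim because $(p^{m}-1)/(p-1)+1 = (p^{m}+p-2)/(p-1)$.

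First I would reduce to a homogeneous situation. As $|Q|$ is prime to $p$ the order $\mathbb{Z}_{p}Q$ is separable, so by the structure of coprime actions $V$ has a $Q$-invariant direct factor $W$ that is homogeneous of exponent $p^{m}$ (a product of copies of a fixed cyclic group of order $p^{m}$). Because $F[W]$ is a $Q$-stable unital subalgebra of $FV$, one has $J(F[W]^{Q}) = F[W]^{Q} \cap J(R)$ and hence $LL(R) \ge LL(F[W]^{Q})$; thus it suffices to treat $V = W$, where $FV \cong F[x_{1},\dots,x_{n}]/(x_{1}^{p^{m}},\dots,x_{n}^{p^{m}})$. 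I then pass to the associated graded algebra $\mathrm{gr}(FV)$ for the radical filtration: here $\mathrm{gr}(FV) \cong F[\bar{x}_{1},\dots,\bar{x}_{n}]/(\bar{x}_{i}^{p^{m}})$ with $\bar{x}_{i}$ in degree $1$, the identity $y^{p^{m}} = \sum_{i}\alpha_{i}^{p^{m}}\bar{x}_{i}^{p^{m}}$ shows the defining ideal is intrinsic and hence $Q$-stable, and the induced $Q$-action on the degree-one part is $F$-linear. Since $|Q|$ is invertible and $F$ is algebraically closed, I can diagonalize this action and take the $\bar{x}_{i}$ to be eigenvectors with characters $\chi_{i}\colon Q \to F^{\times}$.

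A monomial $\prod_{i}\bar{x}_{i}^{c_{i}}$ with $0 \le c_{i} < p^{m}$ is then $Q$-invariant exactly when $\prod_{i}\chi_{i}^{c_{i}} = 1$, and it is the leading term of the genuine invariant $g := \rho\bigl(\prod_{i}x_{i}^{c_{i}}\bigr)$, where $\rho = |Q|^{-1}\sum_{q}q$ is the Reynolds projection onto $R$; indeed $\rho$ preserves every radical power and fixes an already-invariant leading term. Because leading terms multiply in $\mathrm{gr}(FV)$ whenever their product is nonzero, $g^{t}$ has leading term $\prod_{i}\bar{x}_{i}^{c_{i}t}$, which survives exactly for $t \le \min_{c_{i}\ge 1}\lfloor (p^{m}-1)/c_{i}\rfloor$. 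So everything reduces to finding a nonzero invariant monomial all of whose exponents are at most $p-1$, since then $t$ may be taken to be $(p^{m}-1)/(p-1)$. For this I would use that the degree-one module is defined over $\mathbb{F}_{p}$, so its eigen-characters fall into Frobenius orbits: choosing an $\mathbb{F}_{p}$-irreducible constituent of dimension $f$, its characters are $\chi,\chi^{p},\dots,\chi^{p^{f-1}}$ with $\mathrm{ord}(\chi) \mid p^{f}-1$. Writing $N = \mathrm{ord}(\chi)$ in base $p$ as $N = \sum_{j=0}^{f-1}c_{j}p^{j}$ with digits $c_{j}\in\{0,\dots,p-1\}$, the monomial $\prod_{j}(\bar{x}^{(j)})^{c_{j}}$ in the corresponding eigenvectors is invariant because $\prod_{j}(\chi^{p^{j}})^{c_{j}} = \chi^{N} = 1$, and its exponents are precisely these digits.

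I expect the \emph{main obstacle} to be this final combinatorial step together with its compatibility with the nilpotency relations: one must keep all exponents below $p$ (so that the power $g^{t}$ survives as long as possible) while simultaneously landing on the trivial character (so that $g$ stays invariant), and it is exactly the base-$p$ expansion of $\mathrm{ord}(\chi)$ within a single Frobenius orbit that reconciles these two competing demands. The homogeneous reduction is what makes the chosen eigenvectors part of an honest monomial basis of $\mathrm{gr}(FV)$, so that no unexpected relations corrupt the computation of the nilpotency degree of $g$; without it one would have to track the interaction of the diagonalizable $p'$-action with the several distinct relations $\bar{x}_{i}^{p^{a_{i}}}=0$, which is where the argument would become delicate.
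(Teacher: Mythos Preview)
Your reduction to a homocyclic $Q$-invariant direct factor and the passage to the associated graded are both reasonable, but the argument breaks at the diagonalisation step. You assert that because $|Q|$ is invertible and $F$ is algebraically closed the $Q$-action on the degree-one piece $J/J^{2}$ can be diagonalised, with eigenvectors affording linear characters $\chi_{i}\colon Q\to F^{\times}$; later you refine this to an $\mathbb{F}_{p}$-irreducible constituent whose $F$-constituents are the Frobenius orbit $\chi,\chi^{p},\dots,\chi^{p^{f-1}}$. Both statements presuppose that the representation in question is a sum of one-dimensional $FQ$-modules, i.e.\ that $Q$ acts through an abelian quotient there. Nothing in the hypotheses forces this. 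For instance take $p=3$, $V=(\mathbb{Z}/9\mathbb{Z})^{2}$, and let $Q=Q_{8}$ act via its absolutely irreducible two-dimensional representation, realised inside $SL_{2}(\mathbb{Z}/9\mathbb{Z})$. Then $V$ is already homocyclic, the only $Q$-invariant direct summand of exponent $9$ is $V$ itself, and $V/pV$ is the irreducible two-dimensional $FQ_{8}$-module with \emph{no} one-dimensional constituent. There is no Frobenius orbit of linear characters to feed into your base-$p$ digit construction, so the argument never produces the required invariant monomial, even though the theorem still demands $LL\bigl((FV)^{Q_{8}}\bigr)\ge 5$ here.

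The paper sidesteps this entirely by staying inside $F[Z(D)]$ and never diagonalising. It takes an orbit $\mathcal{O}$ of an element of maximal order $p^{m}$ and sets $a=|\mathcal{O}|\cdot 1-\sum_{u\in\mathcal{O}}u$; since $|\mathcal{O}|$ divides $e(B)$ it is a unit in $F$, and $a\in J(R)$. Writing $t=1+p+\cdots+p^{m-1}$ and using $a^{p^{i}}=|\mathcal{O}|\cdot 1-\sum_{u}u^{p^{i}}$, one expands $a^{t}=a\cdot a^{p}\cdots a^{p^{m-1}}$ and checks by an elementary order argument that no nontrivial product $\prod_{i\in I}u_{i}^{\,p^{i}}$ with $I\subseteq\{0,\dots,m-1\}$ can equal $1$; hence the coefficient of $1$ in $a^{t}$ is exactly $|\mathcal{O}|^{m}\neq 0$. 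This uses only that every $u\in\mathcal{O}$ has order $p^{m}$ and that $|\mathcal{O}|$ is prime to $p$; no abelian hypothesis on $Q$ enters. If you want to rescue your approach you would need an invariant whose leading term you can control even when the degree-one module has higher-dimensional constituents, and the orbit sum above is precisely such an element.
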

 
 \begin{proof}
 We fix an orbit $\mathcal{O}$ 
 of an element in $Z(D)$ with maximal order by the action of $N_{G}(D, b)$. 
 Remark that $|\mathcal{O}|$ divides $e(B)$ since $DC_{G}(D)$ acts trivially on $Z(D)$, namely $|\mathcal{O}| \neq 0$ in $F$. We put $a = |\mathcal{O}| 1 - \sum_{u \in \mathcal{O}} u$ where $1$ is the unit in $G$. As $a$ is contained in $J(F[Z(D)]^{N_{G}(D, b)})$ it suffices to prove that $a^{t} \neq 0$ where $t = 1 + p + \dots + p^{m-1}$. For $0 \le i \le m-1$, 
 \[ a^{p^{i}} = |\mathcal{O}|^{p^{i}} 1 - \sum_{u \in \mathcal{O}} u^{p^{i}} = |\mathcal{O}| 1 - \sum_{u \in \mathcal{O}} u^{p^{i}}\]
  by Fermat's theorem. Hence each term of $a^{t} = a \cdot a^{p} \cdots a^{p^{m-1}}$ has the form
 \[ (-1)^{|I|} |\mathcal{O}|^{m- |I|} \prod_{i \in I} (u_{i})^{p^{i}} \]
 where $I \subseteq \{ 0, 1, \dots, m-1\}$ and $u_{i} \in \mathcal{O}$. Suppose now that $\prod_{i \in I} (u_{i})^{p^{i}} = 1$ for some $I \neq \emptyset$. Since the order of $u_{i}$ is $p^{m}$ for any $i \in I$, we may assume $|I| \ge 2$. If $r = \min \{ I \}$ and $s = \min \{ I - \{r\} \}$, we obtain
 \[1 = (1)^{p^{m-s}} = \{ \prod_{i \in I} (u_{i})^{p^{i}} \}^{p^{m-s}} = \prod_{i \in I}(u_{i})^{p^{m-s+i}} = (u_{r})^{p^{m-s+r}} \neq 1, \]
 a contradiction. Thus the coefficient of $1$ in $a^{t}$ is $|\mathcal{O}|^{m} \neq 0$. Therefore $a^{t} \neq 0$ as claimed.
 \end{proof}
 
 In Theorem \ref{Thm4} it seems that $Z(D)$ is generally not replaced with $D$ itself. Suppose that $B$ is a $p$-block with defect group
 \[ M_{p^{d}} = < x, y \ ; \ x^{p^{d-1}} = y^{p} = 1, y^{-1}xy = x^{1+p^{d-2}} > \]
 where $d \ge 4$. As is well known, $M_{p^{d}}$ has exponent $p^{d-1}$ and has cyclic center of order $p^{d-2}$.  By {\cite[Proposition 10]{KS}},
 \[ LL(ZB) = \frac{p^{d-2}-1}{l(B)} + 1 \le p^{d-2} < \frac{p^{d-1}+p-2}{p-1}. \]
 Furthermore we note that Theorem \ref{Thm4} is clear in this case since $l(B) \le p-1$ (cf. {\cite[Theorem 8.1, 8.8]{S}}). 
 
As a corollary to this bound and {\cite[Proposition 2.2]{Ot}}, we deduce:

\begin{cor} \label{Cor5} Let $p^{m}$ be the exponent of $Z(D)$. Then
\[ \frac{p^{m}+p-2}{p-1} \le k(B) - l(B) + 1 \le k(B). \]
 In particular, $p+2 \le k(B)$ provided $m \ge 2$. 
\end{cor}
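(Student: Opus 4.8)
The plan is to sandwich $LL(ZB)$ between the lower bound just established and a known upper bound, then read off the two inequalities. The lower estimate
\[ \frac{p^{m}+p-2}{p-1} \le LL(F[Z(D)]^{N_{G}(D,b)}) \le LL(ZB) \]
is precisely Theorem \ref{Thm4}, so the only additional ingredient is an upper bound for $LL(ZB)$ of the shape $k(B)-l(B)+1$. This is supplied by \cite[Proposition 2.2]{Ot}, which gives $LL(ZB) \le k(B)-l(B)+1$. Concatenating these yields
\[ \frac{p^{m}+p-2}{p-1} \le LL(ZB) \le k(B)-l(B)+1 \le k(B), \]
where the final inequality is merely the observation that $l(B)\ge 1$ for any block. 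This already establishes the displayed chain of the corollary.

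Should one prefer to reprove the middle inequality rather than cite it, the argument is structural and I would proceed as follows. Recall $ZB$ is a commutative local $F$-algebra of dimension $k(B)$, so its radical $J=J(ZB)$ has nilpotency index $LL(ZB)$, and the Reynolds ideal $R$, of dimension $l(B)$, lies inside the socle of $ZB$. For $0 \le n \le LL(ZB)-2$ one has $J\cdot J^{n}=J^{n+1}\neq 0$, so $J^{n}$ is not annihilated by $J$ and hence $J^{n}\not\subseteq \operatorname{soc}(ZB)$, whereas $J^{LL(ZB)-1}\subseteq \operatorname{soc}(ZB)$. Passing to $\overline{ZB}=ZB/\operatorname{soc}(ZB)$, the image of $J$ is then nilpotent of index exactly $LL(ZB)-1$, so $\dim \overline{ZB} \ge LL(ZB)-1$. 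Since $\dim \operatorname{soc}(ZB) \ge \dim R = l(B)$, one concludes
\[ k(B) = \dim \overline{ZB} + \dim \operatorname{soc}(ZB) \ge (LL(ZB)-1) + l(B), \]
which rearranges to the desired bound $LL(ZB)\le k(B)-l(B)+1$.

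For the concluding assertion, suppose $m\ge 2$. Then $p^{m}\ge p^{2}$ forces
\[ \frac{p^{m}+p-2}{p-1} \ge \frac{p^{2}+p-2}{p-1} = \frac{(p-1)(p+2)}{p-1} = p+2, \]
and combining this with the chain above gives $p+2 \le k(B)$, which is the statement. I expect no genuine obstacle here, since the substantive content sits entirely in Theorem \ref{Thm4} (already proved) and in the cited \cite[Proposition 2.2]{Ot}. The only points demanding care are the clean factorization $p^{2}+p-2=(p-1)(p+2)$ that drives the numerical conclusion, and, if the upper bound is reproved, the verification that $\operatorname{soc}(ZB)$ contains $J^{LL(ZB)-1}$ but none of the lower radical powers, so that the Loewy length drops by exactly one upon passing to the quotient.
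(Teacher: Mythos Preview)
Your argument is correct and matches the paper's approach exactly: the paper presents this as an immediate consequence of Theorem~\ref{Thm4} together with \cite[Proposition~2.2]{Ot}, which is precisely the chain $\frac{p^{m}+p-2}{p-1}\le LL(ZB)\le k(B)-l(B)+1\le k(B)$ you write down, and your factorisation $p^{2}+p-2=(p-1)(p+2)$ gives the final clause. The optional reproof of the middle inequality you include is sound as well, though the paper simply cites \cite{Ot} without reproving it.
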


At the end of this section we reconsider the previous remark. 

Here assume $d > 0$ and one of the following holds:
\begin{itemize}
\item[(iii)] $Z(D)$ is not elementary abelian;
\item[(iv)] $e(B) \le mr \cdot c(n)$ for some $2 \le n \le LL(ZB)$;
\item[(v)] $e(B) \le mr \cdot l(B)$
\end{itemize}
where $p^{m}$ and $r$ are the exponent and the rank of $Z(D)$, respectively. Then $2 \sqrt{p-1} \le k(B)$ by Corollary \ref{Cor5} and the methods in (i) and (ii).



\section{Examples}

We introduce two examples by Brough-Schwabrow \cite{BS1} and H\'{e}thelyi-K\"{u}lshammer \cite{HK} in order to understand our theorems. In the following let $P$ be a Sylow $p$-subgroup of $G$. 

\begin{enumerate}
\item Assume $p=3$ and $G={}^{2}G_{2}(q)$ is the simple Ree group where $q$ is a power of $3$ at least $27$. 
Then $FG$ decomposes into two $3$-blocks: the principal block $B_{0}$ with $q+7$ characters and simple block $B_{1}$ with the Steinberg character. 
By {\cite[Theorem 1.1]{BS1}}, we have $LL(ZB_{0}) = 3$.
In this case Theorem \ref{Thm4} indicates that $Z(P)$ has exponent $3$. In fact 
\[ Z(P) = \{ x(0, 0, v) \ ; \ v \in {\mathbb F}_{q} \} \]
 by the notations {\cite[page 59]{BS1}} and 
 \[ x(0, 0, v)^{3}=x(0, 0, 3v) = x(0, 0, 0). \] 

\item We denote by $k(G)$ and $l(G)$ the numbers of irreducible ordinary and Brauer characters of $G$, respectively and apply Corollary \ref{Cor5} to $P$ as a defect group of the principal block. Then it follows that $p+2 \le k(G)$ when $Z(P)$ is non-elementary, otherwise it is likely that $k(G)-l(G)+1$ is "small". Now assume $p \equiv 23 \pmod{264}$. Let $X$ be a cyclic group of order $x = (p-1)/22$ and $H$ the unique $2$-fold covering group of the four-degree symmetric group. Then $H \times X$ acts freely on an elementary abelian group $P$ of order $p^{2}$ and the Frobenius group $G = P \rtimes (H \times X)$ satisfies
\[ k(G) = \frac{p^{2}-1}{48x} + 8x = \frac{217}{264}p + \frac{25}{264} \]
as mentioned in {\cite[Remark (iii)]{HK}}. Since $P$ is a normal Sylow $p$-subgroup of $G$, 
\[ l(G) = k(H \times X) = 8x = \frac{4}{11}p - \frac{4}{11} \]
and
\[ k(G) - l(G) + 1 = \frac{11}{24}p + \frac{35}{24}. \]

\end{enumerate}


\begin{thebibliography}{99}

\bibitem{B}
M. Brou\'{e},
\textit{Brauer coefficients of $p$-subgroups associated with a $p$-block of a finite group},
J. Algebra \textbf{56} (1979), 365--383.

\bibitem{BS1}
J. Brough and I. Schwabrow,
\textit{On centres of $3$-blocks of the Ree groups $^{2}G_{2}(q)$},
J. Algebra \textbf{492} (2017), 57--73.

\bibitem{BS}
J. Brough and I. Schwabrow, 
\textit{On the Loewy length of the center of a block with elementary abelian defect groups},
Comm. Algebra \textbf{46} (2018), 829--833.

\bibitem{HK}
L. H\'{e}thelyi and B. K\"{u}lshammer, 
\textit{On the number of conjugacy classes of a finite solvable group},
Bull. London Math. Soc. \textbf{32} (2000), 668--672.


\bibitem{KKS}
S. Koshitani, B. K\"{u}lshammer and B. Sambale,
\textit{On Loewy lengths of blocks},
Math. Proc. Cambridge Philos. Soc. \textbf{156} (2014), 555--570.

\bibitem{K}
B. K\"{u}lshammer,
\textit{Crossed products and blocks with normal defect groups},
Comm. Algebra \textbf{13} (1985), 147--168.

\bibitem{KOS}
B. K\"{u}lshammer, Y. Otokita and B. Sambale,
\textit{Loewy lengths of centers of blocks II},
to appear in Nagoya Math. J. 

\bibitem{KS}
B. K\"{u}lshammer and B. Sambale,
\textit{Loewy lengths of centers of blocks},
to appear in Q. J. Math.

\bibitem{MM}
G. Malle and A. Mar\'{o}ti,
\textit{On the number of $p'$-degree characters in a finite group},
Int. Math. Res. Not. IMRN 2016, 6118--6132.

\bibitem{M}
A. Mar\'{o}ti,
\textit{A lower bound for the number of conjugacy classes of a finite group},
Adv. Math. \textbf{290} (2016), 1062--1078.

\bibitem{Ok}
T. Okuyama,
\textit{On the radical of the center of a group algebra},
Hokkaido Math. J. \textbf{10} (1981), 406--408.

\bibitem{Ot}
Y. Otokita,
\textit{Characterizations of blocks by Loewy lengths of their centers},
Proc. Amer. Math. Soc. \textbf{145} (2017), 3323--3329.


\bibitem{S}
B. Sambale,
\textit{Blocks of finite groups and their invariants},
Springer Lecture Notes in Math. Vol. 2127, Springer--Verlag, Cham, 2014.



\end{thebibliography}
\end{document}